%
%
%
\documentclass[12pt]{article}
\usepackage{amsmath, amsthm}
\usepackage{amscd}
\usepackage{amssymb}
\usepackage{array}
\usepackage{color}
\usepackage{enumerate}
\usepackage{tikz}
\usetikzlibrary{arrows}

\newtheorem{thm}{Theorem}[section]
\newtheorem{theorem}[thm]{Theorem}

\newtheorem{proposition}[thm]{Proposition}

\theoremstyle{definition}
\newtheorem{definition}[thm]{Definition}
\newtheorem{example}[thm]{Example}

\newtheorem{remark}[thm]{Remark}



\newcommand{\fg}{\mathfrak {g}}
\newcommand{\fh}{\mathfrak {h}}

\newcommand{\fk}{\mathfrak {k}}

\newcommand{\fp}{\mathfrak {p}}

\newcommand{\fs}{\mathfrak {s}}

\newcommand{\fsu}{\mathfrak {su}}
\newcommand{\fz}{\mathfrak {z}}
\newcommand{\fsl}{\mathfrak {sl}}

\newcommand{\R}{\mathbb{R}}
\newcommand{\C}{\mathbb{C}}

\newcommand{\al}{\alpha}

\newcommand{\De}{\Delta}

\newcommand{\be}{\beta}

\newcommand{\aut}{\mathrm{aut}}
\newcommand{\beq}{\begin{equation}}
\newcommand{\eeq}{\end{equation}}

\begin{document}

\medskip

\centerline{\Large \bf   The Push the button algorithm}

\bigskip

\centerline{\Large \bf  for contragredient Lie superalgebras} 

\bigskip

\centerline{R. Fioresi, R. Palmieri}

\bigskip

\centerline{\it 
Dipartimento di Matematica, Universit\`{a} di
Bologna }
 \centerline{\it Piazza di Porta S. Donato, 5. 40126 Bologna. Italy.}
\centerline{{\footnotesize e-mail: rita.fioresi@UniBo.it,
riccardo.palmieri91@gmail.com}}

\section{Introduction}

The ``push the button'' algorithm was originally
introduced by Chuah et al. in \cite{ch1}
to give an alternative proof of the Borel-De Siebenthal Theorem, 
a central result in the
problem of classification of the real forms of 
a given complex simple Lie algebra $\fs$. 

\medskip
The purpose of the present paper is to explain how the ``push the button''
algorithm can be successfully applied to the Vogan superdiagram
associated to a contragredient Lie superalgebra,
so to obtain the equivalent super version of the Borel-De Siebenthal Theorem.
Since in the supersetting black and grey vertices have an established
meaning, we will {\sl circle} the non compact roots, instead of coloring
them. 

\medskip
The idea of the push the button algorithm is not novel in the
supersetting. In fact in \cite{hsin}, Hsin has used it to show
how one can reduce the number of dark vertices of
a Dynkin diagram of a given contragredient Lie superalgebra, however 
with no mention of the real forms of $\fg$. 
On the other hand, the problem of the classification
of the real forms of contragredient Lie superalgebras was succesfully
treated in the works by Kac \cite{kac}, Serganova \cite{serg},
Parker \cite{parker}.


\medskip
We believe that our purely combinatorial approach can help
to elucidate the question 
whether or not two real forms of the same contragredient
Lie superalgebra $\fg$
are isomorphic, since it reduces the question to examine
the push the button algorithm on the Vogan diagram
of $\fg_0$. Some care must of course be exterted because there is not
a unique Dynkin diagram associated with $\fg$. Hence we prefer to work
with extended Dynkin diagrams and to single out the preferred one.

\medskip
For clarity of exposition, we limit ourselves to the case  \begin{align*}
\mathfrak{h}\subset\mathfrak{k}, \qquad \mathrm{rk}(\fk)=\mathrm{rk}(\fg)
\end{align*}
where no arrows appear in the Vogan superdiagrams. This case is
very relevant for the applications (see \cite{cfv1}, \cite{cfv2}).

\medskip
Our paper is organized as follows. 
In Sec. \ref{prelim}, we recall few known facts about real
forms of contragredient Lie superalgebras. In Sec. \ref{vogan}, we
introduce Vogan diagrams and superdiagrams. In Sec. \ref{ptb},
we show how to adapt the ``push the button'' algorithm to
Vogan superdiagrams. In the end, we examine some examples to show how
effectively the algorithm allows to decide whether or not two
real forms of the same contragredient Lie superalgebra are isomorphic.  

\bigskip
{\bf Acknoledgements}. We want to thank Prof. M.K. Chuah
and Prof. I. Dimitrov for
valuable comments.

\section{Preliminaries}\label{prelim} 

Let $\fg=\fg_0 \oplus \fg_1$ be a contragredient Lie superalgebra,
which is not a Lie algebra. $\fg$ is
one of the following Lie superalgebras (see \cite{kac}):
$$
\fsl(m, n), \, B(m, n), \, C(n), \, D(m, n), \, D(2, 1; \alpha), 
\, F(4), \, G(3)
$$
Let $\fh$ be a Cartan subalgebra of $\fg$ with root system $\De \subset
\fh^*$ and root 
space decomposition:
$$
\fg = \fh \oplus \sum_{\al \in \De} \fg_\al
$$
Let us fix a simple system $\Pi$. 
We can associate to $\fg$ an \textit{extended Dynkin diagram}.
Its vertices represent $\Pi \cup \varphi$, $\varphi$ the lowest root, with
colors white, grey or black, together with edges drawn according to \cite{kac}
pg. 54-55. As usual, with a common abuse of language, we say ``roots''
also to refer to vertices of the Dynkin diagram.
Let $D_0$ be the subdiagram of $D$ 
consisting of the white vertices, and 
let $D_1$ be the subdiagram of dark (i.e. grey or
black) vertices. 
There are distinct $D$ due to the choice of $\Pi$, but we can 
pick out a {\sl preferred one}.

\begin{theorem} (\cite{ch2} Theorem 1.1). 
There exists an extended Dynkin diagram $D$ such that
\begin{enumerate}[(a)]
\item $D_0$ is the Dynkin diagram of $\fg_0^{ss}$ (the semisimple
part of $\fg_0$);
\item $|D_1| - 1 = dim\fz(\fg_0)$ (the center of $\fg_0$);
\item $D_1$ are the lowest roots of the adjoint $\fg_0$-representation 
on $\fg_1$.
\end{enumerate}
Furthermore, there are unique positive integers
$\{a_\al\}_D$ without nontrivial common factor such that
\begin{equation} \label{a-labels}
\sum_{\al \in D} a_\al \al = 0.
\end{equation}
\end{theorem}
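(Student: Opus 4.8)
\medskip
\noindent\textbf{Proof strategy.}
The plan is to build $D$ by hand from the distinguished $\Z$-grading of $\fg$, read off (a)--(c), and then extract \eqref{a-labels}. Recall from Kac's classification that $\fg$ carries a grading $\fg=\bigoplus_{|j|\le d}\fg^{(j)}$, with $d=1$ in type I (the algebras $\fsl(m,n)$ with $m\ne n$ and $C(n)$) and $d=2$ in type II (the other five families, the exceptionals included): $\fg^{(0)}$ is reductive and contains $\fh$, the pieces $\fg^{(\pm1)}$ are odd and $\fg^{(0)}$-irreducible, $\fg^{(\pm2)}$ are even and $\fg^{(0)}$-irreducible, so $\fg_0=\fg^{(-2)}\oplus\fg^{(0)}\oplus\fg^{(2)}$ and $\fg_1=\fg^{(-1)}\oplus\fg^{(1)}$; the distinguished simple system $\Pi^{\mathrm{dist}}$ has a single dark (odd) root $\be$, which is the $\fg^{(0)}$-lowest weight of $\fg^{(1)}$, and the highest root $\theta$ of $\fg$ lies in $\fg^{(d)}$. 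I take $D:=\Pi^{\mathrm{dist}}\cup\{\varphi\}$, $\varphi:=-\theta$ the lowest root of $\fg$, with the edges of \cite{kac}: the extended distinguished Dynkin diagram. Its white vertices are $D_0=\Pi^{\mathrm{dist}}\setminus\{\be\}$ in type I and $D_0=(\Pi^{\mathrm{dist}}\setminus\{\be\})\cup\{\varphi\}$ in type II, and its dark vertices are $D_1=\{\be,\varphi\}$, resp.\ $D_1=\{\be\}$. Since $|D|=\mathrm{rk}(\fg)+1$ and $|D_0|=\mathrm{rk}(\fg_0^{ss})=\mathrm{rk}(\fg)-\dim\fz(\fg_0)$, property (b) is immediate.

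For (a) and (c) I fix the Borel $\fb_0\subset\fg_0$ to be the distinguished Borel on $\fg^{(0)}$ enlarged by the \emph{lower} even part $\fg^{(-2)}$ (so just $\fb^{\mathrm{dist}}\cap\fg_0$ in type I). Two grading--degree remarks then do the work. First, in type II $-\theta$ lies in $\fg^{(-2)}\subset\fg_0$ and is indecomposable as a sum of two $\fb_0$-positive roots of $\fg_0$: the only admissible degrees of the summands are $0$ and $-2$, so such a decomposition would exhibit $\theta$ plus a positive root of $\fg$ as a root, contradicting the maximality of $\theta$; hence $-\theta$ is a simple root of $\fg_0^{ss}$ for $\fb_0$, which together with the simple roots $\Pi^{\mathrm{dist}}\setminus\{\be\}$ of $(\fg^{(0)})^{ss}$ and the vertex count of the previous paragraph shows $D_0$ is the Dynkin diagram of $\fg_0^{ss}$ (in type I, $D_0$ is simply the even part of the distinguished diagram, which is known to be that diagram); this is (a). Second, $\be$ is a lowest weight of $\fg_1$ for $\fb_0$: subtracting a $\fb_0$-positive root of $\fg_0$ from $\be$ stays in degree $1$, where $\be$ is already the lowest weight of $\fg^{(1)}$, or jumps to degree $\ge3$, which is empty; likewise $-\theta$ is a lowest weight in degree $-1$. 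So in type I the summands $\fg^{(\pm1)}$ of $\fg_1$ have $\fb_0$-lowest weights $\be$ and $-\theta$, while in type II the single $\fg_0$-irreducible module $\fg_1=\fg^{(-1)}\oplus\fg^{(1)}$ has unique $\fb_0$-lowest weight $\be$; either way $D_1$ is exactly the set of lowest roots of $\fg_1$, which is (c). The step genuinely done family by family is to check, against \cite{kac}, pp.~54--55, that the bonds of $D$ reproduce the listed extended Dynkin diagram, and to verify the grading facts above for $D(2,1;\al)$, $F(4)$, $G(3)$; I expect this bookkeeping to be the main --- if modest --- obstacle.

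Finally, \eqref{a-labels}. Writing $\theta=\sum_{\al\in\Pi^{\mathrm{dist}}}a_\al\al$ and putting $a_\varphi:=1$ gives $\sum_{\al\in D}a_\al\al=\theta-\theta=0$. Integrality of the $a_\al$ is clear; positivity is the Borel--de Siebenthal step: if some coefficient $a_\ga$, $\ga\in\Pi^{\mathrm{dist}}$, were zero, a neighbour $\ga'$ with $a_{\ga'}>0$ would force $(\theta,\ga)<0$, hence $\theta+\ga\in\De$, contradicting the maximality of $\theta$ --- the implication from $(\theta,\ga)<0$ to $\theta+\ga\in\De$ needing a brief direct check from the root strings of \cite{kac} when $\ga$ is grey. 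Dividing out the common factor makes the $a_\al$ coprime. Uniqueness is automatic: $D\setminus\{\varphi\}=\Pi^{\mathrm{dist}}$ is a basis of $\fh^*$, so the linear relations among the $\mathrm{rk}(\fg)+1$ roots of $D$ form a rank-one subgroup of $\Z^{D}$, which the coprimality and positivity conditions pin down. This would complete the proof.
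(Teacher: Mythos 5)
The paper does not actually prove this statement: it is imported verbatim as Theorem~1.1 of \cite{ch2}, so there is no in-paper argument to compare yours against. Judged on its own, your reconstruction via the distinguished $\Z$-grading is the natural route and is consistent with what the preferred diagram $D$ actually is in the paper's examples (the extended distinguished diagram, e.g.\ the cycle with two grey nodes for $\fsl(3|2)$ and the $D_m\cup C_n$ configuration for $D(m,n)$): the type~I/type~II dichotomy correctly produces $|D_1|=2$ resp.\ $|D_1|=1$, hence (b); the Borel $\fb_0$ built from the degree-$0$ positive roots together with all of $\fg^{(-2)}$ does come from a regular functional, and your degree-counting arguments for the indecomposability of $-\theta$ and for $\be$, $-\theta$ being lowest weights of $\fg_1$ are sound. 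Two points carry the real weight and are only gestured at. First, the edge-by-edge agreement with Kac's tables and the grading facts for $D(2,1;\al)$, $F(4)$, $G(3)$ are genuinely case-by-case; since the theorem is ultimately a statement about seven explicit families, this deferred bookkeeping is most of the proof. Second, your positivity argument for the marks $a_\al$ via ``$(\theta,\ga)<0\Rightarrow\theta+\ga\in\De$'' is fragile exactly where you flag it: for isotropic grey $\ga$ the root-string argument is not the ordinary one, and for $D(2,1;\al)$ the invariant form is nonstandard; in practice one simply reads the marks off Kac's list (where they are tabulated and visibly positive), which also settles coprimality. The uniqueness argument from linear independence of $\Pi^{\mathrm{dist}}$ is fine for the listed algebras (it would fail for $\mathfrak{psl}(n|n)$, which is excluded). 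So: no gap in conception, but the proof as written delegates the decisive verifications to the tables it cites --- which is, in fairness, essentially how the original reference handles them.
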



From now on, we will choose $D$ as the preferred Dynkin diagram.

\medskip
The real forms $\fg_\R$ and their symmetric spaces have been 
classified and studied by Parker \cite{parker} and V. Serganova \cite{serg}.
We have a bijective correspondence:
\beq \label{real-corr}
\{\hbox{real forms}\, \fg_\R \subset  \fg\} \leftrightarrow 
\{\theta \in \aut_{\{2,4\}}(\fg)\}
\eeq
In this correspondence $\theta$ stabilizes $\fg_\R$, 
and the restriction of $\theta_\R$ to $\fg_\R$ is a
Cartan automorphism. Hence we have the Cartan decomposition:
\beq \label{c-dec}
\fg_{0,\R} = \fk_\R + \fp_{0,\R},
\eeq
where $\fk_\R$ and $\fp_{0,\R}$ are the $\pm 1$-eigenspaces of 
$\theta_\R$ on $\fg_{0,\R}$.
Since $\theta$ has order $2$ on $\fg_0$ and order $4$ on $\fg_1$,
we have the corresponding complex Cartan decomposition
\beq \label{compl-cdec}
\fg = \fk + \fp,
\eeq
where $\fp=\fp_0+\fp_1$, $\fp_1=\fg_1$ and we drop the index $\R$ to 
mean the complexification. So we immediately have:
$$
[\fk,\fk]\subset \fk, \quad [\fk,\fp] \subset \fp, \quad
[\fp_0, \fp_0] \subset \fk, 
\quad[\fp_0, \fp_1] \subset \fp.
$$

\medskip
We assume that:
\beq \label{rank-ass}
 \fh \subset \fk, \qquad \mathrm{rk}(\fk)=\mathrm{rk}(\fg)
\eeq
Hence $\fk$ and $\fp$ are sums of roots spaces and we call a
root \textit{compact} or \textit{non compact} depending on 
whether its root space
sits in $\fk$ or $\fp$. 

\section{Vogan diagrams and superdiagrams}\label{vogan}

In the ordinary setting, if $\fs$ is a complex simple
Lie algebra and $D_\fs$ its
Dynkin diagram, we can associate to a real form $\fs_\R$ a Vogan
diagram $(D_\fs,C)$, which corresponds 
(under our assumption (\ref{rank-ass})) to
a circling $C$ of the non compact vertices. Vice-versa, every circling
on the vertices of $D_\fs$ gives a Vogan diagram corresponding
of a real form of $\fs$.

\medskip

Unlike the Dynkin diagram $D_\fs$, that
identifies uniquely the Lie algebra $\fs$, 
the Vogan diagrams $(D_\fs,C)$ do not correspond
bijectively to the real forms of $\fs$; however we have the following
important result.

\begin{theorem} {\bf (Borel-De Siebenthal).}
(Thm. 6.88  and Thm. 6.96 \cite{knapp}). 
\label{bs-ord}
Let $\fs$ be a complex simple Lie algebra.
Any circling on the Dynkin diagram $D_\fs$ is the Vogan diagram of a 
real form of $\fs$. Furthermore, 
any real form $\fs_\R$ of $\fs$ is associated to a Vogan
diagram with at most one circled vertex.
\end{theorem}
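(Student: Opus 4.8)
The plan is to reproduce the shape of the proof of this classical theorem given by Chuah et al.\ in \cite{ch1} (an alternative to the original argument of \cite{knapp}), since that is exactly the argument we shall later transplant to the super setting. I would treat the two assertions separately. For the first, given a circling $C$ on $D_\fs$ I would build the Cartan involution by hand: fix a compact real form $\fs_u$ of $\fs$ with conjugation $\tau_0$ compatible with $\fh$, and a Chevalley--Weyl basis $\{h_i,e_\al,f_\al\}$ adapted to $\tau_0$; let $\epsilon_C$ be the homomorphism from the root lattice $\Z\De$ to $\{\pm1\}$ sending a simple root to $-1$ precisely when it is circled; and let $\theta_C$ act as the scalar $\epsilon_C(\al)$ on $\fg_\al$ and as the identity on $\fh$. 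Then $\theta_C$ is an involutive automorphism which commutes with $\tau_0$, the antilinear involution $\tau:=\tau_0\theta_C$ cuts out a real form $\fs_\R:=\fs^{\tau}$, and a routine positivity computation with the Killing form (using $\theta_C|_{\fs_\R}=\tau_0|_{\fs_\R}$) shows $\theta_C|_{\fs_\R}$ is a Cartan involution. By construction the compact roots of $\theta_C$ are exactly the uncircled ones, so $(D_\fs,C)$ is the Vogan diagram of $\fs_\R$; none of this is hard.

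For the harder second assertion I would start from an arbitrary real form $\fs_\R$ --- which carries some Vogan diagram by the classical correspondence between real forms and Cartan involutions (cf.\ Section \ref{prelim}) --- and work on the extended Dynkin diagram $\widehat D_\fs$ obtained by adjoining the lowest root $\varphi$, with labels $\{a_\al\}$ normalized so that $\sum_\al a_\al\al=0$ and $a_\varphi=1$, and with $\varphi$ circled exactly when $\sum_{\al\ \mathrm{circled}}a_\al$ is odd. I would then introduce two operations on circlings that do not change the isomorphism class of the associated real form: (i) applying an automorphism of $\widehat D_\fs$; and (ii) the ``push the button'' move of \cite{ch1}, which modifies the circling in a prescribed way in the neighbourhood of a chosen vertex $v$ (its super analogue is the content of Section \ref{ptb}). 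Finally I would check, running through the finitely many extended diagrams of types $A_n,\dots,G_2$, that by a suitable sequence of moves (i) and (ii) every circling can be brought to one with at most one circled vertex in $D_\fs$. This last, entirely finite, case analysis is precisely what the push the button algorithm automates.

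The main obstacle --- and the heart of the matter --- will be proving that move (ii) genuinely preserves the isomorphism class of $\fs_\R$. For two circlings $C,C'$ differing by one application of the move at a vertex $v$, I would exhibit an explicit automorphism of $\fs$ --- concretely, a lift to the adjoint group of a reflection in the Weyl group (finite or affine), of the form $\exp\bigl(\tfrac{\pi}{2}\,\mathrm{ad}(e_{\al_v}-f_{\al_v})\bigr)$ possibly adjusted by a torus element --- that conjugates $\theta_C$ to $\theta_{C'}$ and carries the distinguished simple system attached to $C$ onto that attached to $C'$; the real work is to verify that conjugation by it sends $\epsilon_C$ to $\epsilon_{C'}$. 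The bookkeeping is kept consistent by the affine relation $\sum_\al a_\al\al=0$, which is exactly what forces the rule prescribed for the vertex $\varphi$ to be respected throughout. Granting this conjugacy statement, the reduction of the previous paragraph together with the first assertion yields the theorem; transporting both the ``build $\theta$ from the circling'' step and, especially, this conjugacy step to contragredient Lie superalgebras is what the rest of the paper carries out.
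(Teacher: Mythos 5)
The paper offers no proof of this theorem: it is quoted verbatim from Knapp (Thms.\ 6.88 and 6.96 of \cite{knapp}) and used as a black box, so there is no internal argument to compare yours against line by line. What you have written is, in outline, the Chuah--Hu alternative proof via the push-the-button algorithm --- precisely the strategy the paper credits to \cite{ch1} in the introduction and then transplants to the super setting in Section \ref{ptb} (your ``move (ii) preserves the isomorphism class'' is the ordinary-Lie-algebra analogue of Prop.\ \ref{if_part}, proved there by realizing $F_i$ as the simple reflection $s_i$, i.e.\ as a change of positive system rather than an honest conjugation of $\theta_C$; your ``finite case analysis'' is Corollary 5.2 of \cite{ch1}, which the paper likewise cites rather than reproves in Theorem \ref{bds}). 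Your sketch is sound, and it is in fact more informative than the paper's citation, but two points deserve care if you flesh it out: the move $F_v$ is only legitimate at a \emph{circled} (noncompact) vertex $v$, since it is the reflection in a noncompact simple root that flips the circling of the single-edge neighbours --- you should say so explicitly; and note that \cite{ch1} runs the algorithm on the ordinary Dynkin diagram while the extended-diagram version you describe is the subject of \cite{ch-hu1}, so you should either commit to one setting or observe (as you implicitly do via the parity rule for $\varphi$) that a reduced extended circling with its unique circle at $\varphi$ cannot occur, so the two formulations agree. Knapp's own proof of the ``at most one circled vertex'' half is entirely different --- a non-constructive minimization over a coset of the weight lattice --- so be aware that what you are proving is the combinatorial replacement for it, which is exactly what makes it exportable to contragredient Lie superalgebras.
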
 

This theorem allows us to associate to a real form a Vogan
diagram with just one circled vertex; we call such
diagrams \textit{reduced}. Two real forms of $\fs$ are
isomorphic if and only if there is a diagram
symmetry between their reduced Vogan diagrams
(see \cite{chuahtams}).

\medskip
We now turn to examine the supersetting.

\begin{definition} Let $\fg$ be a complex contragredient Lie superalgebra.
A \textit{Vogan superdiagram} is a pair $(D, C)$,
where $D$ is the preferred Dynkin diagram of $\fg$, with vertices 
$\Pi \cup \varphi$, and the \textit{circling} $C$ is a subset of the even
roots in  $\Pi  \cup \varphi$.
\end{definition}

If $\fg_\R$ is a real form of of $\fg$, we can associate to it
the Vogan superdiagram obtained by taking as circling $C$  the subset of the 
non compact even roots in 
$\Pi  \cup \varphi$ (see \cite{chuahmz}).  
Since the odd roots are always non compact,
we omit the circling on them. However,
more than one Vogan superdiagram may correspond to the
same real form $\fg_\R$ of $\fg$: this depends on the choice of the
simple system of $\fg$, which may give
a different circling of the even simple roots. For example, the
following two Vogan superdiagrams correspond to the 
same real form $\fsu(2,1|1,1)$
of $\fsl(3|2)$:
\begin{center}
\begin{tikzpicture}[>=stealth',semithick,auto]
    \tikzstyle{subj} = [circle, minimum width=8pt, fill, inner sep=0pt]
    \tikzstyle{obj}  = [circle, minimum width=8pt, draw, inner sep=0pt]
    \tikzstyle{objs}  = [circle, minimum width=4pt, draw, inner sep=0pt]
    \tikzstyle{dc}   = [circle, minimum width=8pt, draw, inner sep=0pt, path picture={\draw (path picture bounding box.south east) -- (path picture bounding box.north west) (path picture bounding box.south west) -- (path picture bounding box.north east);}]

    \tikzstyle{every label}=[font=\bfseries]

    \node[dc, label=above:$\varphi$] (xa) at (3.5,1) {};
    \node[obj] (za) at (1,0) {};
    \node[obj] (ya) at (2,0) {};
    \node[objs] (a) at (2,0) {}; 
    \node[obj] (yb) at (3,0) {};
    \node[objs] (b) at (3,0) {};         
    \node[dc] (xb) at (4,0) {};
    \node[obj] (yc) at (5,0) {};
    \node[objs] (c) at (5,0) {};      
    \node[obj] (zb) at (6,0) {};    
        
    \path[-] (xa) edge (za)
             (za) edge (ya)
             (ya) edge (yb)
             (yb) edge (xb)
             (xb) edge (yc)
             (yc) edge (zb)
             (zb) edge (xa);

    \node[dc, label=above:$\varphi$] (xa) at (3.5,-2) {};
    \node[obj] (za) at (1,-3) {};
    \node[obj] (ya) at (2,-3) {};
    \node[obj] (yb) at (3,-3) {};
    \node[objs] (b) at (3,-3) {};         
    \node[dc] (xb) at (4,-3) {};
    \node[obj] (yc) at (5,-3) {};
    \node[objs] (c) at (5,-3) {};      
    \node[obj] (zb) at (6,-3) {};    
        
    \path[-] (xa) edge (za)
             (za) edge (ya)
             (ya) edge (yb)
             (yb) edge (xb)
             (xb) edge (yc)
             (yc) edge (zb)
             (zb) edge (xa);

\end{tikzpicture}
\end{center}
\centerline{Fig. 1}
\medskip
We will see in the next section, how the push the button algorithm
allows us to see immediately that these two Vogan superdiagrams
correspond to isomorphic superalgebras.

\medskip
We have however an important difference with the ordinary
setting: not all the circlings on the preferred Dynkin $D$ are
associated with a real form of $\fg$, but only the {\sl admissible}
ones. We have the following theorem.

\begin{theorem} (Prop. 2.22 \cite{chuahmz}).
A circling $C$ on the preferred Dynkin diagram $D$ of $\fg$ 
corresponds to a real form $\fg_\R$ of $\fg$ if and only 
if 
\begin{equation} \label{adm-circ}
\sum_{\al \in C} a_\al \al =
\begin{cases} \mathrm{even}, \quad \mathrm{if} \quad \fg_0=\fsl_n(\C)\\
 \mathrm{odd}, \quad \mathrm{if} \quad \fg_0\neq \fsl_n(\C)\\
\end{cases}
\end{equation}
where the $a_\al$'s are defined in (\ref{a-labels}).
\end{theorem}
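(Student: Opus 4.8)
The plan is to translate the combinatorial condition \eqref{adm-circ} into the algebraic condition that the candidate involution $\theta_C$ determined by the circling $C$ actually has the correct order on $\fg$, namely order $2$ on $\fg_0$ and order $4$ on $\fg_1$, so that it lies in $\aut_{\{2,4\}}(\fg)$ and hence, by the correspondence \eqref{real-corr}, defines a real form. First I would recall that a circling $C$ on the preferred diagram $D$ determines a diagram automorphism of order $2$ on the root lattice in the usual Vogan way: on simple roots $\al\in\Pi$ one sets $\theta_C$ to act as $+1$ if $\al\notin C$ and as $-1$ if $\al\in C$, and then extends. The subtlety in the super case is that this must be compatible with the $\Z_2$-grading: an odd root must go to a non-compact (hence in the Vogan picture, ``sign $-1$'') root space, which is why the odd roots are never circled and why the resulting automorphism has order $4$ rather than $2$ on $\fg_1$. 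So the first step is to make precise how $C$ produces an automorphism $\theta_C$ of $\fg$ that restricts to the Vogan involution on $\fg_0$ and squares to the parity automorphism on $\fg_1$.

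Next I would compute the order of $\theta_C$ explicitly on $\fg_1$ in terms of $C$. The key point is that $\theta_C^2$ acts on a root space $\fg_\al$ by the scalar $(-1)^{\langle \mu_C,\al\rangle}$ where $\mu_C$ is the element of $\fh^*$ (or a coweight) encoding the circling, essentially $\mu_C=\sum_{\al\in C}\al$ read against the dual basis; equivalently $\theta_C^2=\mathrm{Ad}(\exp(\pi i\, h_C))$ for the appropriate $h_C\in\fh$. For $\theta_C$ to belong to $\aut_{\{2,4\}}(\fg)$ we need $\theta_C^2=\mathrm{id}$ on $\fg_0$ (automatic, since even roots pair integrally in the right way) and $\theta_C^2=-\mathrm{id}$ on $\fg_1=\fp_1$. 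The second condition is a parity statement: evaluating $\mu_C$ against an odd root must always be odd. Using the description of $D_1$ from Theorem 1.1(c) — the dark vertices are the lowest roots of the $\fg_0$-action on $\fg_1$ — together with the relation $\sum_{\al\in D}a_\al\al=0$ from \eqref{a-labels}, one reduces this parity statement on \emph{all} odd roots to a single parity statement, namely the parity of $\sum_{\al\in C}a_\al\al$ evaluated in the appropriate sense. This is where the dichotomy $\fg_0=\fsl_n(\C)$ versus $\fg_0\neq\fsl_n(\C)$ enters: it reflects whether the center $\fz(\fg_0)$ contributes, i.e. whether $|D_1|-1=\dim\fz(\fg_0)$ is $0$ or not (Theorem 1.1(b)), which shifts the required parity by one.

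The converse direction — that every real form arises from such an admissible circling — I would get by running the argument backwards: given $\theta\in\aut_{\{2,4\}}(\fg)$ stabilizing $\fh\subset\fk$, conjugate so that $\theta$ preserves a simple system, read off which even simple roots are non-compact to define $C$, and observe that the order-$4$ condition on $\fg_1$ forces exactly the congruence \eqref{adm-circ}. The main obstacle I anticipate is the reduction step in the forward direction: showing that the parity of $\mu_C$ against a single odd root (say the one coming from a dark vertex $\beta\in D_1$) already controls the parity against every odd root. This requires knowing that any odd root differs from $\beta$ by a $\Z$-combination of \emph{even} roots against which $\mu_C$ pairs evenly — which in turn uses the explicit structure of the contragredient Lie superalgebras and, crucially, the defining relation $\sum_{\al\in D}a_\al\al=0$ to handle the passage between $\Pi$ and the extended diagram. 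Carrying this out uniformly across the seven families $\fsl(m,n)$, $B(m,n)$, $C(n)$, $D(m,n)$, $D(2,1;\alpha)$, $F(4)$, $G(3)$ is the technical heart, but in each case it is a finite check against the tables in \cite{kac}.
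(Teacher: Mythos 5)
This theorem is not proved in the paper at all: it is quoted verbatim as Prop.~2.22 of \cite{chuahmz}, and the text supplies only the citation. So the comparison can only be between your outline and the strategy of Chuah's original argument. On that level your plan points in the right direction: identifying the circling $C$ with a candidate automorphism $\theta_C$ acting by prescribed scalars on simple root spaces, translating membership in $\aut_{\{2,4\}}(\fg)$ (order $2$ on $\fg_0$, order $4$ on $\fg_1$) into a congruence on the labels $a_\al$, and obtaining the converse by conjugating a given $\theta$ to stabilize a simple system is indeed the standard route through Kac's theory of finite-order automorphisms.

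As a proof, however, the proposal has a genuine gap precisely at what you yourself call the technical heart, and one of its guiding claims is wrong. First, the circling lives on the \emph{extended} diagram $\Pi\cup\{\varphi\}$, so prescribing the residue of $\al(h_C)$ at every vertex of $D$ is an overdetermined system: the relation $\sum_{\al\in D}a_\al\al=0$ of (\ref{a-labels}) forces $\sum_{\al\in D}a_\al\,\al(h_C)=0$, and the admissibility condition (\ref{adm-circ}) is exactly the solvability of that system with $\al(h_C)$ integral or half-integral on uncircled/circled even vertices and in $\tfrac14+\tfrac12\Z$ on odd ones. Your outline stops at ``one reduces this parity statement to a single parity statement,'' which is the entire content of the theorem; nothing is computed, and the asserted decoupling (``$\theta_C^2=\mathrm{id}$ on $\fg_0$ is automatic'') only holds after that consistency has been established. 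Second, your proposed explanation of the dichotomy --- that $\fg_0=\fsl_n(\C)$ versus $\fg_0\neq\fsl_n(\C)$ reflects whether $|D_1|-1=\dim\fz(\fg_0)$ vanishes --- does not match the statement: $B(m,n)$ and $D(m,n)$ have $\dim\fz(\fg_0)=0$ yet $\fg_0\neq\fsl_n(\C)$, while $C(n)$ has $\dim\fz(\fg_0)=1$ and also $\fg_0\neq\fsl_n(\C)$, so ``center versus no center'' cannot be the dividing line. That part of the plan would have to be discarded and replaced by the actual determination of the parity of $\varphi$ and of the odd labels in each of the seven families, i.e.\ the case check against Kac's tables that you defer.
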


From now on we will consider only \textit{admissible}
circlings, that is circlings satisfying the condition (\ref{adm-circ}).
We end the section with an example to clarify the condition (\ref{adm-circ}),
which essentially gives necessary and sufficient conditions to extend
a real form of $\fg_0$ to a real form of $\fg$.

\begin{example}
Let us consider $\fg=D(4,2)$ with the following circling:

\medskip
\begin{center}
\begin{tikzpicture}[>=stealth',semithick,auto]
    \tikzstyle{subj} = [circle, minimum width=8pt, fill, inner sep=0pt]
    \tikzstyle{obj}  = [circle, minimum width=8pt, draw, inner sep=0pt]
    \tikzstyle{objs}  = [circle, minimum width=4pt, draw, inner sep=0pt]
    \tikzstyle{dc}   = [circle, minimum width=8pt, draw, inner sep=0pt, path picture={\draw (path picture bounding box.south east) -- (path picture bounding box.north west) (path picture bounding box.south west) -- (path picture bounding box.north east);}]

    \tikzstyle{every label}=[font=\bfseries]

    \node[obj] (a) at (0,0) {};
    \node[obj] (b) at (0,1) {};  
    \node[obj] (c) at (1,0.5) {};
    \node[obj] (d) at (2,0.5) {};
    \node[objs] (dd) at (2,0.5) {};  
    \node[dc] (e) at (3,0.5) {};
    \node[obj] (f) at (4,0.5) {};       
    \node[obj, label=right:$\varphi$] (g) at (5,0.5) {};
    \node[objs] (gg) at (5,0.5) {};    
    \path[-] (a) edge (c)
             (b) edge (c)
             (c) edge (d)
             (d) edge (e)
             (e) edge (f);
   \path[<-] (f) edge[double] (g);
             
\end{tikzpicture} 
\end{center}
\medskip

\centerline{Fig. 2}
\medskip

We notice that $\fg_0=D_4\oplus C_2$ and that the circled vertex
has label $a_\al=2$. Hence this circling is not admissible and this
abstract Vogan superdiagram does not correspond to any real
for of $\fg$, despite the fact there is a real form of $\fg_0$
corresponding to the (disconnected) Vogan diagram on $\fg_0$:

\medskip
\begin{center}
\begin{tikzpicture}[>=stealth',semithick,auto]
    \tikzstyle{subj} = [circle, minimum width=8pt, fill, inner sep=0pt]
    \tikzstyle{obj}  = [circle, minimum width=8pt, draw, inner sep=0pt]
    \tikzstyle{objs}  = [circle, minimum width=4pt, draw, inner sep=0pt]
    \tikzstyle{dc}   = [circle, minimum width=8pt, draw, inner sep=0pt, path picture={\draw (path picture bounding box.south east) -- (path picture bounding box.north west) (path picture bounding box.south west) -- (path picture bounding box.north east);}]

    \tikzstyle{every label}=[font=\bfseries]

    \node[obj] (a) at (0,0) {};
    \node[obj] (b) at (0,1) {};  
    \node[obj] (c) at (1,0.5) {};
    \node[obj] (d) at (2,0.5) {};
    \node[objs] (dd) at (2,0.5) {};  
    \node[obj] (f) at (4,0.5) {};       
    \node[obj] (g) at (5,0.5) {};
    \node[objs] (gg) at (5,0.5) {};        
    \path[-] (a) edge (c)
             (b) edge (c)
             (c) edge (d);
   \path[<-] (f) edge[double] (g);
             
\end{tikzpicture}
\end{center}
\medskip

\centerline{Fig. 3}
\medskip
Hence the real form of $\fg_0$ described by the Vogan diagram
in Fig. 3 will not extend to give a real form of the whole $\fg$.

\end{example}

\section{The push the button algoritm}\label{ptb}

We now define the operation $F_i$ that will lead us to
the push the button algorithm (see \cite{ch-hu1}). Our purpose is
to obtain the equivalent of Thm. \ref{bs-ord} in the super setting.

\begin{definition} 
Let $(D,C)$ be a Vogan superdiagram.
If $i \in C$ is an even vertex,
we define $F_i(D,C)$ as a new superdiagram $(D,C')$, where
all the vertices $j$ adjacent to $i$ have reversed their
circling (i.e. they become circled if they were not
and become not circled if they were circled), except
when $j$ is a longer root joint to $i$ by a double edge or $j$ is odd.

In other words, if we define the neighborhood of
vertex $i$ by:
\beq\label{Fi}
N(i) = \{\hbox{vertices adjacent to}\, i \, \hbox{excluding} \, i\},
\eeq 
Then $F_i(D,C)=(D,C')$, where we reverse the circling of all
$j \in N(i)$, except when $j$ is a longer root joint to $i$
by a double edge or $j$ is odd. 

\medskip
We also say that the Vogan superdiagram $(D,C')$ is obtained from
$(D,C)$ through the operation $F_i$. 
The reader can see that an operation $F_i$ can be visually
understood as ``pressing'' on the vertex $i$: the vertex itself
will not change the circling, while the adjacent vertices, if
linked by a single edge, will. 

\medskip
We say that two Vogan superdiagrams are \textit{$F$-related}, if
there is a sequence of operations $F_i$'s transforming one
into the other. For example, consider the two diagrams:

\end{definition}
\begin{center}\label{diagr}
\begin{tikzpicture}[>=stealth',semithick,auto]
    \tikzstyle{subj} = [circle, minimum width=8pt, fill, inner sep=0pt]
    \tikzstyle{obj}  = [circle, minimum width=8pt, draw, inner sep=0pt]
    \tikzstyle{objs}  = [circle, minimum width=4pt, draw, inner sep=0pt]
    \tikzstyle{dc}   = [circle, minimum width=8pt, draw, inner sep=0pt, path picture={\draw (path picture bounding box.south east) -- (path picture bounding box.north west) (path picture bounding box.south west) -- (path picture bounding box.north east);}]

    \tikzstyle{every label}=[font=\bfseries]

    \node[dc, label=above:$\varphi$] (xa) at (4.5,1) {};
    \node[obj] (za) at (1,0) {};
    \node[objs] (a) at (1,0) {};
    \node[obj] (ya) at (2,0) {};
    \node[objs] (a) at (2,0) {}; 
    \node[obj] (yb) at (3,0) {};
    \node[objs] (b) at (3,0) {}; 
    \node[obj] (yc) at (4,0) {};
    \node[objs] (c) at (4,0) {};          
    \node[dc] (xb) at (5,0) {};
    \node[obj] (yd) at (6,0) {};
    \node[objs] (d) at (6,0) {};      
    \node[obj] (zb) at (7,0) {};               
    \node[obj] (zc) at (8,0) {};        
    \path[-] (xa) edge (za)
             (za) edge (ya)
             (ya) edge (yb)
             (yb) edge (yc)
             (yc) edge (xb)
             (xb) edge (yd)
             (yd) edge (zb)
             (zb) edge (zc)
             (zc) edge (xa);

    \node[dc, label=above:$\varphi$] (xa) at (4.5,-2) {};
    \node[obj] (za) at (1,-3) {};
    \node[obj] (ya) at (2,-3) {};
    \node[obj] (yb) at (3,-3) {};
    \node[objs] (b) at (3,-3) {}; 
    \node[obj] (yc) at (4,-3) {};          
    \node[dc] (xb) at (5,-3) {};
    \node[obj] (yd) at (6,-3) {};
    \node[objs] (d) at (6,-3) {};      
    \node[obj] (zb) at (7,-3) {};                
    \node[obj] (zc) at (8,-3) {};        
    \path[-] (xa) edge (za)
             (za) edge (ya)
             (ya) edge (yb)
             (yb) edge (yc)
             (yc) edge (xb)
             (xb) edge (yd)
             (yd) edge (zb)
             (zb) edge (zc)
             (zc) edge (xa);   
    
\end{tikzpicture}
\end{center}

\centerline{Fig. 4}
\medskip

It is immediate to verify that the above diagrams  are
$F$-related. In fact $F_2$ followed by $F_4$ and $F_3$
will transform one into the other
(assuming the horizontal vertices are labelled with consecutive
integers $1, 2, \dots , 8$). Similarly one can verify that the
two diagrams in Fig. 1 are $F$-related: apply the operation $F_2$.

\medskip

\begin{proposition}\label{if_part}
If $F_i(D,C)=(D,C')$, then $(D,C)$ and $(D,C')$ correspond to
the same real form.
\end{proposition}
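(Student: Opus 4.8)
The plan is to realize the combinatorial move $F_i$ as conjugation of the Cartan automorphism by an inner lift of the even reflection $s_{\alpha_i}$, and then read off the resulting Vogan superdiagram; the conclusion then follows at once from the correspondence (\ref{real-corr}) between real forms and conjugacy classes of $\{2,4\}$-automorphisms. This is the direct analogue of the argument in the ordinary case (see \cite{ch1} and Thm.~6.96 in \cite{knapp}). To set up: let $\theta\in\aut_{\{2,4\}}(\fg)$ be the automorphism attached, via (\ref{real-corr}), to the real form whose Vogan superdiagram is $(D,C)$. By the rank assumption (\ref{rank-ass}) we have $\fh\subset\fk$, so $\theta$ fixes $\fh$ pointwise; hence $\theta$ stabilizes every root space $\fg_\beta$ and acts there by a scalar $\lambda(\beta)$, with $\lambda(\beta)=\pm1$ when $\beta$ is even and $\lambda(\beta)=\pm i$ when $\beta$ is odd. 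Since $\theta$ is a superalgebra automorphism, $\lambda$ is multiplicative: $\lambda(\beta+\gamma)=\lambda(\beta)\lambda(\gamma)$ whenever $\beta,\gamma,\beta+\gamma\in\De$. By definition of the circling, an even root $\beta\in\Pi\cup\varphi$ lies in $C$ if and only if $\lambda(\beta)=-1$.

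Next comes the reflection step. Fix the circled even vertex $i\in C$, with even root $\alpha_i$. Because $\alpha_i$ is even it is an ``integrable'' direction: the root spaces $\fg_{\pm\alpha_i}$ generate a copy of $\fsl_2$ inside $\fg_0$, the reflection $s_{\alpha_i}$ preserves both $\De$ and the parity of roots, and $s_{\alpha_i}$ lifts to an inner automorphism $\psi_i$ of $\fg$ that normalizes $\fh$, induces $s_{\alpha_i}$ on $\fh^*$, and maps $\fg_\beta$ to $\fg_{s_{\alpha_i}\beta}$. Put $\theta':=\psi_i\,\theta\,\psi_i^{-1}$. Being conjugate to $\theta$ by an inner automorphism, $\theta'$ again lies in $\aut_{\{2,4\}}(\fg)$ and, by (\ref{real-corr}), defines the \emph{same} real form as $\theta$. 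So it suffices to show that the Vogan superdiagram of $\theta'$ is exactly $F_i(D,C)$.

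Now one reads off $F_i$. A short computation shows that $\theta'$ also fixes $\fh$ pointwise (so $(\fh,\Pi,D)$ is again adapted to it and $\mathrm{rk}(\fk')=\mathrm{rk}(\fg)$) and that $\theta'$ acts on $\fg_\beta$ by the scalar $\lambda(s_{\alpha_i}\beta)$ for every $\beta\in\De$. For an even root $\beta$, multiplicativity of $\lambda$ together with $\lambda(\alpha_i)=-1$ (valid since $i\in C$) gives
\[
\lambda\bigl(s_{\alpha_i}\beta\bigr)=\lambda\bigl(\beta-\langle\beta,\alpha_i^\vee\rangle\,\alpha_i\bigr)=(-1)^{\langle\beta,\alpha_i^\vee\rangle}\,\lambda(\beta).
\]
Hence, with respect to the same diagram $D$, the circling $C'$ of $\theta'$ is obtained from $C$ by reversing the circling of precisely those even $\beta\in\Pi\cup\varphi$ for which the Cartan integer $\langle\beta,\alpha_i^\vee\rangle$ is odd. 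It remains to match this with the defining rule of $F_i$: on the extended diagram $D$ one has $\langle\beta,\alpha_i^\vee\rangle=0$ when $\beta=\alpha_i$ or $\beta\notin N(i)$; $\langle\beta,\alpha_i^\vee\rangle=-2$ when $\beta\in N(i)$ is the longer root of a double edge at $i$; and $\langle\beta,\alpha_i^\vee\rangle\in\{-1,-3\}$ for every remaining neighbor of $i$. Since odd roots carry no circling and are untouched by $F_i$, the set $C'$ is exactly the circling produced by $F_i$, i.e. the superdiagram of $\theta'$ is $F_i(D,C)=(D,C')$; combined with the previous paragraph, $(D,C)$ and $(D,C')$ correspond to the same real form.

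I expect the only genuinely ``super'' point — the step to be handled with care — to be the odd-vertex clause in the definition of $F_i$: conjugating by $\psi_i$ may actually change $\theta$ on the odd root spaces adjacent to $\alpha_i$ (sending an eigenvalue $\pm i$ to $\mp i$), but this change is invisible to the Vogan superdiagram, which records no circling on odd roots, and it is immaterial for the statement because ``same real form'' is already secured by the inner conjugacy $\theta'=\psi_i\theta\psi_i^{-1}$. One should also confirm that the inner lift $\psi_i$ genuinely exists in $\aut(\fg)$ for contragredient $\fg$; this is standard, as even roots of a contragredient Lie superalgebra behave like roots of a reductive Lie algebra. Everything else is bookkeeping with Cartan integers on $D$, the double-edge exception corresponding to the single even value $-2$.
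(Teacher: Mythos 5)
Your proposal is correct and follows essentially the same route as the paper: both realize $F_i$ as the even simple reflection $s_{\alpha_i}$ and observe that the circling of a neighbour $\beta$ flips exactly when the Cartan integer $n_{\beta\alpha_i}=\langle\beta,\alpha_i^\vee\rangle$ is odd, which matches the double-edge and odd-vertex exceptions in the definition of $F_i$. The only difference is presentational — you conjugate the automorphism $\theta$ by an inner lift of $s_{\alpha_i}$ while the paper replaces the simple system by $s_{\alpha_i}\Pi$ and keeps $\theta$ fixed — and your version spells out more explicitly (via the multiplicative character $\lambda$) the parity computation that the paper states in one line.
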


\begin{proof}
The operation $F_i$ corresponds to the reflection $s_i$ for the
even vertex $i$. In fact, if $\al=i$ is a circled root 
and $\be$ is adjacent to $\al$ we have
$$
s_{\al}(\be)=\be-n_{\be\al}\al
$$
where $n_{\be\al}=-1$, $-2$, $-3$. Hence in the Dynkin diagram, 
the adjacent pair $\{\al,\be\}$ is replaced
by the pair $\{-\al,\be-n_{\be\al}\al\}$. Since $\al$ is
even, the pair $\{-\al,\be-n_{\be\al}\al\}$ will have the same parity
as the pair $\{\al,\be\}$. Hence if $\be$ is odd, $\be-n_{\be\al}\al$ is
also odd hence it will not change its circling (recall odd roots are always
non compact, hence we omit their circling).
If $\be$ is even, the root $\be-n_{\be\al}\al$
will have same circling as $\be$ if and only if
$n_{\be\al}$ is even, hence the result.  
\end{proof}

\begin{remark} \label{weyl-rem}
The sequence of $F_i$
operations we used in the previous proposition
to transform  $(D,C)$ into $(D, C')$ corresponds to the action
of an element of the Weyl group of $\fg_0$. We cannot however
claim that the simple system $\Gamma$, associated with $(D,C)$ is 
transformed by such element
into the simple system $\Gamma'$ associated with $(D,C')$.
This is because simple systems, even associated with the same
Dynkin diagram, may not be conjugated by the action of the Weyl
group. However, with the
push the button algorithm, we bypass this difficulty, thus
showing another advantage of this purely combinatorial approach
to the theory of real forms of contragredient Lie superalgebras.
\end{remark}

In \cite{ch1} Chuah has developed 
an algorithm ({\sl the push the button algorithm})
to prove that, starting from any Vogan
diagram associated with the real form of a simple Lie algebra,
one can obtain, through $F_i$ operations, a Vogan diagram with
just one circled vertex. All the Vogan diagrams obtained in this
procedure correspond to the isomorphism class of the same real Lie algebra.
It is our purpose to generalize this statement to the super setting.

\medskip
We are ready to prove the super version of Th. \ref{bs-ord}.

\begin{theorem} {\bf (Borel-de Siebenthal)}. \label{bds}
Let $\fg$ be a contragredient Lie algebra, $D$ its preferred
Dynkin diagram. Any admissible circling on $D$ is the Vogan superdiagram of a 
real form of $\fg$. Furthermore, 
any real form $\fg_\R$ of $\fg$ is associated to a Vogan
superdiagram with an admissible circling having
at most as many circled vertices as  
the number of connected components of $D\setminus D_1$.
\end{theorem}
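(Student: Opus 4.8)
The plan is to split the statement into its two assertions and handle them separately, reducing both to the ordinary Borel--de Siebenthal theorem (Theorem \ref{bs-ord}) applied to the even part $\fg_0$, together with the admissibility criterion (\ref{adm-circ}) and Proposition \ref{if_part}. For the first assertion --- that every admissible circling $C$ on $D$ is the Vogan superdiagram of some real form --- I would argue as follows. The circling $C$ restricts to a circling on the Vogan diagram of $\fg_0^{ss}$ (equivalently on the components of $D\setminus D_1$), and by the ordinary Borel--de Siebenthal theorem this determines a real form of $\fg_0^{ss}$, hence of $\fg_0$ once we account for the center $\fz(\fg_0)$ (whose dimension is $|D_1|-1$ by part (b) of Theorem 2.1). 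The content of the admissibility condition (\ref{adm-circ}), as remarked after Example 2.7, is exactly that this real form of $\fg_0$ extends to a real form of $\fg$: one builds the corresponding $\theta\in\aut_{\{2,4\}}(\fg)$ by taking the even involution prescribed by $C$ on $\fg_0$ and extending it to $\fg_1$ using the $\fg_0$-module structure, the parity constraint (\ref{adm-circ}) being precisely what guarantees $\theta^4=\mathrm{id}$ and the bracket relations close. This is essentially Prop. 2.6 (\cite{chuahmz}), so I would cite it and spell out only the identification of the restricted circling with a genuine Vogan diagram of $\fg_0$.

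For the second assertion I would run the push-the-button algorithm. Start from any Vogan superdiagram $(D,C)$ attached to a given real form $\fg_\R$. Since the operations $F_i$ only reverse circlings on even vertices and, by Proposition \ref{if_part}, preserve the real form, I can work component by component on $D\setminus D_1$: within each connected component $Z$ of $D\setminus D_1$, the restriction of $C$ is an ordinary Vogan diagram circling, and Chuah's push-the-button algorithm for simple Lie algebras (\cite{ch1}, used in the proof of Theorem \ref{bs-ord}) reduces it, via $F_i$ operations internal to $Z$, to a circling with at most one circled vertex in $Z$. The only subtlety is that an $F_i$ operation at a vertex $i\in Z$ adjacent to a dark vertex does nothing to that dark vertex (odd neighbors are skipped by definition of $F_i$) and does nothing outside $Z$, so the reductions in distinct components do not interfere. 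Applying the algorithm in each of the $c$ components of $D\setminus D_1$ in turn yields a Vogan superdiagram with at most $c$ circled vertices, and it is still admissible because admissibility is preserved by $F_i$ (again Proposition \ref{if_part}, or directly: $F_i$ corresponds to a Weyl reflection $s_i$ of $\fg_0$, which fixes the relation $\sum a_\al\al=0$ up to sign and permutes parities trivially on even roots).

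The main obstacle I anticipate is \emph{not} the combinatorics of propagating $F_i$ across components --- that is routine --- but rather making sure the per-component reduction is legitimate when a component $Z$ of $D\setminus D_1$ is \emph{not} itself the Dynkin diagram of a simple Lie algebra in the naive sense, or when $\fg_0^{ss}$ has a component of exceptional/non-simply-laced type where the long-root exception in the definition of $F_i$ matters. One must check that Chuah's algorithm, as applied in the ordinary Borel--de Siebenthal proof, respects exactly the same long-root convention that appears in our Definition of $F_i$; this is true because our $F_i$ was defined (following \cite{ch-hu1}) to match the reflection $s_i$ precisely in this regard, so the reduction goes through verbatim. A secondary point to verify is that after reduction the resulting circling is still admissible --- i.e. that we have not produced an abstract superdiagram outside the image of (\ref{real-corr}); but this is automatic since every diagram in the $F$-orbit represents the same real form $\fg_\R$ we started from, hence in particular is admissible. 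I would close by noting that the bound $c=$ (number of connected components of $D\setminus D_1$) is the natural super-analogue of the ``at most one circled vertex'' bound, the extra slack coming from the fact that the dark vertices $D_1$ disconnect the white Dynkin diagram into several ordinary pieces, on each of which the classical one-vertex bound applies independently.
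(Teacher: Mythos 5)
Your proposal is correct and follows essentially the same route as the paper: the first assertion is reduced to Prop.~2.22 of \cite{chuahmz}, and the second is obtained by combining Proposition \ref{if_part} with Chuah's ordinary push-the-button reduction applied independently to each connected component of $D\setminus D_1=D_0$. The extra checks you flag (non-interference across components, preservation of admissibility along the $F$-orbit) are left implicit in the paper's proof but are consistent with it.
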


\begin{proof} 
The first statement is a consequence of Prop. 2.22 in \cite{chuahmz}.
As for the second statment Prop. \ref{if_part} says that
two Vogan superdiagrams correspond to the same real form if  
one can be transformed into the other by a sequence of $F_i$ operations.
By Corollary 5.2 in \cite{ch1}, the push the button algorithm, 
for ordinary Lie algebras, after a sequence of $F$-operations, we 
can obtain a Vogan diagram for each connected component of 
$D_0$, with at most one circled vertex. 
\end{proof}

We call a Vogan superdiagram
with at most as many circled vertices as  
the number of connected components of $D\setminus D_1$, \textit{reduced}.
This theorem allows us to determine immediately whether two real
forms of the same contragredient Lie superalgebra are isomorphic.
In fact, two real forms
are isomorphic if and only if their reduced Vogan superdiagrams are
related with a diagram symmetry. Hence, given two real forms, we first
draw their Vogan superdiagrams and proceed with the push the button
algorithm so to obtain two reduced Vogan superdiagrams. Then, we verify
if there is a diagram symmetry sending one superdiagram into the other:
if there is, the two real forms are isomorphic, otherwise, they are
not isomorphic.

\medskip
Before we proceed to give examples to illustrate the above procedure, we give
a quick summary of the strategy to follow to obtain a reduced Vogan
diagram in the ordinary setting. The reader can find the details in 
\cite{ch1}.\\
The ordinary push the button algorithm 
consists of two different steps that have to be repeated until is left 
just one circled vertex (i.e. a noncompact root). With the first step,
through repeated $F_i$ operations it is possible to
bring a pair of circled vertices to the right (resp. left)
side of the Vogan diagram. Then the second step consists in
pushing the most right (resp. left) vertex, so that
the number of circled vertices reduces by one.
By repeating this two steps a number of times, 
we can reach a reduced Vogan diagram equivalent to the previous one.
We will see that this strategy works also for Vogan superdiagrams, since
the push the button algorithm operates on the even part of the diagram
as detailed in our previous propositions.
\\


\begin{example}
Consider the two real forms of $\fg=D(5,3)$ corresponding to
the following two admissible circlings:
\begin{center}
\begin{tikzpicture}[>=stealth',semithick,auto]
    \tikzstyle{subj} = [circle, minimum width=8pt, fill, inner sep=0pt]
    \tikzstyle{obj}  = [circle, minimum width=8pt, draw, inner sep=0pt]
    \tikzstyle{objs}  = [circle, minimum width=4pt, draw, inner sep=0pt]
    \tikzstyle{dc}   = [circle, minimum width=8pt, draw, inner sep=0pt, path picture={\draw (path picture bounding box.south east) -- (path picture bounding box.north west) (path picture bounding box.south west) -- (path picture bounding box.north east);}]

    \tikzstyle{every label}=[font=\bfseries]

    \node[obj, label=below:2] (a) at (0,0) {};
    \node[obj, label=below:1] (b) at (0,1) {};  
    \node[objs] (bb) at (0,0) {};
    \node[obj, label=below:3] (c) at (1,0.5) {};
    \node[obj, label=below:4] (d) at (2,0.5) {};
    \node[objs] (dd) at (2,0.5) {};
    \node[obj, label=below:5] (n) at (3,0.5) {};        
    \node[dc, label=below:6] (e) at (4,0.5) {};
    \node[obj, label=below:7] (h) at (5,0.5) {};  
    \node[obj, label=below:8] (f) at (6,0.5) {};       
    \node[obj, label=right:$\varphi$, label=below:9] (g) at (7,0.5) {};
    \node[objs] (gg) at (7,0.5) {};        
    \path[-] (a) edge (c)
             (b) edge (c)
             (c) edge (d)
             (d) edge (n)
             (n) edge (e)
             (e) edge (h)
             (h) edge (f);
   \path[<-] (f) edge[double] (g);
             
\end{tikzpicture}
\end{center}
\begin{center}
\begin{tikzpicture}[>=stealth',semithick,auto]
    \tikzstyle{subj} = [circle, minimum width=8pt, fill, inner sep=0pt]
    \tikzstyle{obj}  = [circle, minimum width=8pt, draw, inner sep=0pt]
    \tikzstyle{objs}  = [circle, minimum width=4pt, draw, inner sep=0pt]
    \tikzstyle{dc}   = [circle, minimum width=8pt, draw, inner sep=0pt, path picture={\draw (path picture bounding box.south east) -- (path picture bounding box.north west) (path picture bounding box.south west) -- (path picture bounding box.north east);}]

    \tikzstyle{every label}=[font=\bfseries]

    \node[obj, label=below:2] (a) at (0,0) {};
    \node[obj, label=below:1] (b) at (0,1) {};  
    \node[objs] (bb) at (0,1) {};
    \node[obj, label=below:3] (c) at (1,0.5) {};
    \node[obj, label=below:4] (d) at (2,0.5) {};
    \node[objs] (dd) at (2,0.5) {};
    \node[obj, label=below:5] (n) at (3,0.5) {};        
    \node[dc, label=below:6] (e) at (4,0.5) {};
    \node[obj, label=below:7] (h) at (5,0.5) {};  
    \node[obj, label=below:8] (f) at (6,0.5) {};       
    \node[obj, label=right:$\varphi$, label=below:9] (g) at (7,0.5) {};
    \node[objs] (gg) at (7,0.5) {};        
    \path[-] (a) edge (c)
             (b) edge (c)
             (c) edge (d)
             (d) edge (n)
             (n) edge (e)
             (e) edge (h)
             (h) edge (f);
   \path[<-] (f) edge[double] (g);
             
\end{tikzpicture}
\end{center}
Applying $F_i$ operations we can reach the two following diagrams with only one circled vertex:
\begin{center}
\begin{tikzpicture}[>=stealth',semithick,auto]
    \tikzstyle{subj} = [circle, minimum width=8pt, fill, inner sep=0pt]
    \tikzstyle{obj}  = [circle, minimum width=8pt, draw, inner sep=0pt]
    \tikzstyle{objs}  = [circle, minimum width=4pt, draw, inner sep=0pt]
    \tikzstyle{dc}   = [circle, minimum width=8pt, draw, inner sep=0pt, path picture={\draw (path picture bounding box.south east) -- (path picture bounding box.north west) (path picture bounding box.south west) -- (path picture bounding box.north east);}]

    \tikzstyle{every label}=[font=\bfseries]

    \node[obj, label=below:2] (a) at (0,0) {};
    \node[obj, label=below:1] (b) at (0,1) {};  
    \node[objs] (bb) at (0,1) {};
    \node[obj, label=below:3] (c) at (1,0.5) {};
    \node[obj, label=below:4] (d) at (2,0.5) {};
    \node[obj, label=below:5] (n) at (3,0.5) {};        
    \node[dc, label=below:6] (e) at (4,0.5) {};
    \node[obj, label=below:7] (h) at (5,0.5) {};  
    \node[obj, label=below:8] (f) at (6,0.5) {};       
    \node[obj, label=right:$\varphi$, label=below:9] (g) at (7,0.5) {};
    \node[objs] (gg) at (7,0.5) {};        
    \path[-] (a) edge (c)
             (b) edge (c)
             (c) edge (d)
             (d) edge (n)
             (n) edge (e)
             (e) edge (h)
             (h) edge (f);
   \path[<-] (f) edge[double] (g);
             
\end{tikzpicture}
\end{center}
\begin{center}
\begin{tikzpicture}[>=stealth',semithick,auto]
    \tikzstyle{subj} = [circle, minimum width=8pt, fill, inner sep=0pt]
    \tikzstyle{obj}  = [circle, minimum width=8pt, draw, inner sep=0pt]
    \tikzstyle{objs}  = [circle, minimum width=4pt, draw, inner sep=0pt]
    \tikzstyle{dc}   = [circle, minimum width=8pt, draw, inner sep=0pt, path picture={\draw (path picture bounding box.south east) -- (path picture bounding box.north west) (path picture bounding box.south west) -- (path picture bounding box.north east);}]

    \tikzstyle{every label}=[font=\bfseries]

    \node[obj, label=below:2] (a) at (0,0) {};
    \node[obj, label=below:1] (b) at (0,1) {};  
    \node[objs] (bb) at (0,0) {};
    \node[obj, label=below:3] (c) at (1,0.5) {};
    \node[obj, label=below:4] (d) at (2,0.5) {};
    \node[obj, label=below:5] (n) at (3,0.5) {};        
    \node[dc, label=below:6] (e) at (4,0.5) {};
    \node[obj, label=below:7] (h) at (5,0.5) {};  
    \node[obj, label=below:8] (f) at (6,0.5) {};       
    \node[obj, label=right:$\varphi$, label=below:9] (g) at (7,0.5) {};
    \node[objs] (gg) at (7,0.5) {};        
    \path[-] (a) edge (c)
             (b) edge (c)
             (c) edge (d)
             (d) edge (n)
             (n) edge (e)
             (e) edge (h)
             (h) edge (f);
   \path[<-] (f) edge[double] (g);
             
\end{tikzpicture}
\end{center}
For the first diagram 
we have to apply $F_2,\ F_3$, and finally $F_1$. For the second one 
$F_1,\ F_3$, and finally $F_2$.
We can easily see that two final diagrams are isomorphic by applying a 
diagram symmetry.

\end{example}
\newpage
\begin{example}
As before let us, consider the two following real forms of $\mathfrak{sl}(3|2)$.

\begin{center}
\begin{tikzpicture}[>=stealth',semithick,auto]
    \tikzstyle{subj} = [circle, minimum width=8pt, fill, inner sep=0pt]
    \tikzstyle{obj}  = [circle, minimum width=8pt, draw, inner sep=0pt]
    \tikzstyle{objs}  = [circle, minimum width=4pt, draw, inner sep=0pt]
    \tikzstyle{dc}   = [circle, minimum width=8pt, draw, inner sep=0pt, path picture={\draw (path picture bounding box.south east) -- (path picture bounding box.north west) (path picture bounding box.south west) -- (path picture bounding box.north east);}]

    \tikzstyle{every label}=[font=\bfseries]

    \node[dc, label=above:$\varphi$, label=below:7] (xa) at (3.5,1) {};
    \node[obj, label=below:1] (za) at (1,0) {};
    \node[obj, label=below:2] (ya) at (2,0) {};
    \node[objs] (a) at (1,0) {}; 
    \node[obj, label=below:3] (yb) at (3,0) {};        
    \node[dc, label=below:4] (xb) at (4,0) {};
    \node[obj, label=below:5] (yc) at (5,0) {};
    \node[objs] (c) at (5,0) {};      
    \node[obj, label=below:6] (zb) at (6,0) {};    
        
    \path[-] (xa) edge (za)
             (za) edge (ya)
             (ya) edge (yb)
             (yb) edge (xb)
             (xb) edge (yc)
             (yc) edge (zb)
             (zb) edge (xa);

    \node[dc, label=above:$\varphi$, label=below:7] (xa) at (3.5,-2) {};
    \node[obj, label=below:1] (za) at (1,-3) {};
    \node[obj, label=below:2] (ya) at (2,-3) {};
    \node[obj, label=below:3] (yb) at (3,-3) {};
    \node[objs] (b) at (3,-3) {};         
    \node[dc, label=below:4] (xb) at (4,-3) {};
    \node[obj, label=below:5] (yc) at (5,-3) {};
    \node[objs] (c) at (5,-3) {};      
    \node[obj, label=below:6] (zb) at (6,-3) {};    
        
    \path[-] (xa) edge (za)
             (za) edge (ya)
             (ya) edge (yb)
             (yb) edge (xb)
             (xb) edge (yc)
             (yc) edge (zb)
             (zb) edge (xa);

\end{tikzpicture}
\end{center}
We observe that the corresponding Vogan diagrams of the real forms
of $\fg_0$ are related by a diagram symmetry; however here we cannot
use this symmetry, since the presence of odd vertices. 
These Vogan superdiagrams nevertheless correspond to isomorphic
real forms and 
in fact we can reach the second diagram, starting from
the first one, with a combination $F_i$ operations, namely $F_1, F_2, F_3$.
\end{example}

\end{document}